\xdef\@endgadget#1{{\unskip\nobreak\hfil\penalty50\hskip1em\hbox{}\nobreak
    \hfil#1\parfillskip=0pt\finalhyphendemerits=0\par}}
\def\@qedsymbol{${}_\blacksquare$}
\def\qed{\@endgadget{\@qedsymbol}}
\newtheorem{lemma}{Lemma}[section]
\newtheorem{proposition}[lemma]{Proposition}
\newtheorem{remark}[lemma]{Remark}
\newcommand{\mR}{\mathbb{R}}
\newcommand{\K}{\mathcal{K}}
\newcommand{\bq}{\begin{equation}}
\newcommand{\eq}{\end{equation}}
\newcommand{\bma}{\begin{bmatrix}}
\newcommand{\ema}{\end{bmatrix}}
\def\BibTeX{{\rm B\kern-.05em{\sc i\kern-.025em b}\kern-.08em
    T\kern-.1667em\lower.7ex\hbox{E}\kern-.125emX}}
\title{\LARGE \bf Hopfield neural networks as port-Hamiltonian and gradient systems}
\author{Arjan van der Schaft
\thanks{A.J. van der Schaft is with the Bernoulli Institute for Mathematics, Computer
Science and AI, Jan C. Willems Center for Systems and Control, University of Groningen, PO Box 407, 9700 AK, the
Netherlands,
        {\tt\small A.J.van.der.Schaft@rug.nl}}
}
\begin{document}

\maketitle
\thispagestyle{empty}
\pagestyle{empty}

\begin{abstract}
The structure of continuous Hopfield networks is revisited from a system-theoretic point of view. After adopting a novel electrical network interpretation involving nonlinear capacitors, it is shown that Hopfield networks admit a port-Hamiltonian formulation provided an extra passivity condition is satisfied. Subsequently it is shown that any Hopfield network can be represented as a gradient system, with Riemannian metric given by the inverse of the Hessian matrix of the total energy stored in the nonlinear capacitors. On the other hand, the well-known 'energy' function employed by Hopfield turns out to be the dissipation potential of the gradient system, and this potential is shown to satisfy a dissipation inequality that can be used for analysis and interconnection.
\end{abstract}

\noindent
Keywords: neural network dynamics, nonlinear capacitors, port-Hamiltonian systems, gradient systems, dissipativity
\section{Introduction}
In \cite{hopfield84} Hopfield presented a continuous ('graded response') version of the discrete neural network introduced by him before in \cite{hopfield82}. While for the discrete model the analogy with Ising spin systems had been made, this continuous version emphasized the similarity with electrical networks; in line with the standard representation of neurons in e.g. the Hodgkin-Huxley model. In the present note we will take a fresh look at the electrical network representation of continuous Hopfield networks. Starting point will be the incorporation of the 'amplifiers' or 'activation functions' of \cite{hopfield84} into the capacitive behavior of the neuron, resulting in a description of the neuron as a nonlinear capacitor in parallel with a linear resistor. This will allow us to define the total energy of the network as the sum of the electrical energies stored in these nonlinear capacitors. Then, the network is shown to admit a port-Hamiltonian formulation provided an extra condition is satisfied with respect to the energy dissipation in the network. Next it is shown that any Hopfield network defines a gradient system, with Riemannian metric given by the inverse of the Hessian of the total stored energy. Furthermore, the dissipation potential of this gradient system is given by the same function as identified by Hopfield in \cite{hopfield84} (but there called 'energy', although strictly speaking it has dimension of power). This extends the gradient formulations of Hopfield networks as given before in \cite{sastry, halder}. Moreover, it will turn out that the dissipation potential serves as storage function for a dissipation inequality that is \emph{different} from the standard passivity dissipation inequality of the port-Hamiltonian formulation. In fact, for constant inputs this alternative dissipation inequality was crucially used in \cite{hopfield84} for showing the associative memory properties of the Hopfield network; see \cite{hirsch, salam, vidyasagar, sastry} for mathematical details.

\section{Electrical network formulation of Hopfield networks}
The equations of a (continuous) Hopfield network as put forward in \cite{hopfield84} are
\bq
\label{hopfield}
C \dot{v} = TV -R^{-1}v + I.
\eq
Here both $v\in \mR^n$ and $V\in \mR^n$ are vectors of voltage potentials at the $n$ nodes (i.e., the neurons) of the network. As indicated in \cite{hopfield84}, $v_i$ can be thought of as the mean soma voltage potential of the $i$-th neuron, and $V_i$ as its output potential (short-term average of its firing rate). Importantly, $V_i$ is assumed to be related to $v_i$ by a function
\bq
\label{amplifier}
V_i=g_i(v_i),
\eq
of \emph{sigmoid} type, with $g_i'(v_i)>0$. In \cite{hopfield84} the functions $g_i$ relating $v_i$ and $V_i$ are referred to as \emph{amplifiers}; in later papers also as \emph{activation functions}. Furthermore, $C$ is a diagonal matrix of capacitances $C_i>0$, $R$ is a diagonal matrix of resistances $R_i>0$ (both due to the $i$-th cell membrane), and $I$ is the vector of incoming currents at the nodes of the network. Finally, the $(i,j)$-th element of the $n \times n$ matrix $T$ describes the synaptic interconnection strength from neuron $i$ to neuron $j$. The matrix $T$ is assumed to be \emph{symmetric}, and is typically \emph{indefinite}.

Let us start by rewriting \eqref{hopfield} as the equations of a generalized electrical network. The main difference with the presentation in \cite{hopfield84} will be the \emph{combination} of the linear capacitors with capacitances $C_i$ and the 'amplifiers' \eqref{amplifier} into \emph{nonlinear capacitors}. For this purpose denote $q_i:=C_iv_i$ as the \emph{charge} of the $i$-th node, and define the function $h_i(q_i):= g_i(\frac{q_i}{C_i})$ with $h_i'(q_i) >0$. Then there exist functions $H_i(q_i)$ (unique up to a constant) such that $h_i(q_i) = H_i'(q_i)$ with $H_i^{''}(q_i)>0$ (implying strict convexity). Hence for every node the \emph{charge} $q_i$ is related to the output \emph{voltage} $V_i$ by
\bq
\label{cap}
V_i=g_i(v_i)= \frac{dH_i}{dq_i}(q_i),\quad i=1,\cdots,n.
\eq
This specifies a \emph{nonlinear capacitor}, with stored \emph{electrical energy} $H_i(q_i)$. Hence the $i$-th neuron in the Hopfield network is represented as a nonlinear capacitor in parallel with a linear resistor $R_i$. Together with the matrix $T$ representing the interaction among the neurons, this constitutes a generalized electrical network. ('Generalized' because typically the elements of $T$ do not correspond to resistors, or any other electrical network element.)
The total electric energy of this generalized electrical network is given by the convex function 
\bq
\label{H}
H(q):= \sum_{i=1}^n H_i(q_i), \quad \frac{\partial^2 H}{\partial q^\top \partial q}(q) >0,
\eq
with $q\in \mR^n$ the vector of charges of the nonlinear capacitors. 

Furthermore, let $H_i^*(V_i)=\sup_{q_i} [q_iV_i - H_i(q_i)]$ be the \emph{Legendre-Fenchel transform} of $H_i(q_i)$. Then the inverse of \eqref{cap} is given by $q_i= \frac{dH^*_i}{dV_i}(V_i)$, and thus $q= \frac{\partial H^*}{\partial V}(V)$, where $H^*(V) = \sum_i H_i^*(V_i)$ and $V$ is the vector of output voltages of the $n$ neurons.
In this notation the 'energy' function introduced by Hopfield in \cite{hopfield84} is given by
\bq
\label{P}
P(V,I) := - \frac{1}{2}V^\top T V + \sum_{i=1}^n \frac{1}{R_iC_i}H_i^*(V_i) - V^\top I,
\eq
and one verifies that the Hopfield network \eqref{hopfield} takes the form
\bq
\label{hopfield1}
\dot{q} = - \frac{\partial P}{\partial V}(V,I), \quad V=\frac{\partial H}{\partial q}(q) = - \frac{\partial P}{\partial I}(V,I).
\eq
These are the same equations as utilized in \cite{hopfield84}, with the addition that $V$ is written as the gradient vector of the total energy $H(q)$. Note, however, that in the electrical network interpretation $P$ is \emph{not} the energy of the system (despite commonly used terminology), but instead a \emph{dissipation potential} that has physical dimension of \emph{power}. In fact, the summands in the expression $P(V,0)$ are similar to \emph{co-content} functions as used in nonlinear electrical network theory. 

Based on the electrical network representation \eqref{hopfield1} of Hopfield networks we will study in the next two sections their \emph{port-Hamiltonian} and \emph{gradient} system formulations.

\section{Port-Hamiltonian formulation of Hopfield networks}
Recall, see e.g. \cite{jeltsema}, that a large class of port-Hamiltonian systems, with inputs $u \in \mR^m$, outputs $y \in \mR^n$, and state $x \in \mR^n$, is given by systems of the form
\bq
\label{pH}
\bma - \dot{x} \\[2mm] y \ema =
\bma -J(x) & -G(x) \\[2mm] G^\top (x) & M(x) \ema
\bma e \\[2mm] u \ema +
\bma \frac{\partial P}{\partial e}(x,e,u) \\[2mm] \frac{\partial P}{\partial u}(x,e,u) \ema, \quad e=\frac{\partial H}{\partial x}(x),
\eq
where the matrices $J(x)$ and $M(x)$ are skew-symmetric, $H$ is the \emph{Hamiltonian} (total energy), and the \emph{dissipation potential} $P: \mR^n \times \mR^n \times \mR^m \to \mR$ satisfies
\bq
\label{dis}
e^\top \frac{\partial P}{\partial e}(x,e,u) + u^\top \frac{\partial P}{\partial u}(x,e,u) \geq 0, \quad \mbox{for all } x,e,u.
\eq
Property \eqref{dis} is seen to be equivalent to
\bq
\frac{d}{dt}H - u^\top y = - \bma \frac{\partial H}{\partial x^\top}(x) & u^\top \ema
\bma - \dot{x} \\ y \ema = -e^\top \frac{\partial P}{\partial e}(x,e,u) - u^\top \frac{\partial P}{\partial u}(x,e,u) \leq 0,
\eq
and thus to the satisfaction of the dissipation inequality $\frac{d}{dt}H \leq u^\top y$; commonly referred to as \emph{passivity} (energy $H$ only increases by external supply of power $u^\top y$). Hence \eqref{dis} will be referred to in the sequel as the \emph{passivity condition}. 

Note that the composed $(n+m) \times (n+m)$ skew-symmetric matrix in \eqref{pH} captures lossless exchange of energy \emph{within} the system (e.g., kinetic energy converted into potential energy, and conversely). On the other hand, \emph{relaxation systems} are port-Hamiltonian systems \eqref{pH} with \emph{zero} skew-symmetric part, where moreover the dissipation potential $P(x,e,u)$ is assumed to be independent of $x$ and the Hamiltonian $H$ has positive Hessian matrix (and thus is strictly convex). Hence relaxation systems are given as \cite{vds24}
\bq
\label{rel}
\begin{array}{rcl}
\dot{x} &= &- \frac{\partial P}{\partial e}(e,u), \qquad e= \frac{\partial H}{\partial x}(x), \quad  \frac{\partial^2 H}{\partial x^\top \partial x}(x) > 0, \\[2mm]
y & = & \frac{\partial P}{\partial u}(e,u).
\end{array}
\eq
\begin{remark}
{\rm
If $P$ is assumed to be \emph{convex} this can be extended to \emph{non-differentiable} $P$ as well; replacing gradients by sub-gradients, see \cite{camlibel}.
}
\end{remark}
It follows that the Hopfield network \eqref{hopfield1} is a relaxation system if and only if $P$ given by \eqref{P} satisfies the passivity condition \eqref{dis}, which can be written out as
\bq
\label{P1}
V^\top R^{-1}v \geq V^\top T V, \quad \mbox{for all $v,V$ related by \eqref{amplifier}.}
\eq
This means that the total energy dissipation at the nodes of the network should be greater than or equal to the total power flowing from one node to another.
In general (depending on $R$, $T$, $g_i$), inequality \eqref{P1} need not be satisfied for all $v$ and $V$, in which case there is internal \emph{active} behavior. On the other hand, assuming for concreteness that $g_i$ are sigmoids between $-1$ and $1$ (e.g., $g_i(v_i)= \tanh (\lambda_i v_i)$), then \eqref{P1} \emph{will} be always satisfied for $v$ large enough (and thus for $V$ close enough to the boundary of the $n$-cube $(-1,1)^n$). Such a weakened passivity property was coined as \emph{semi-passivity} in \cite{pogromsky}: passivity $\frac{d}{dt}H \leq V^\top I$ only holds for states far enough from the origin. As discussed in \cite{pogromsky, steur} semi-passivity of a system implies that for every feedback $u=\phi(y)$ satisfying $y^\top \phi(y) \leq 0$ the closed-loop system has ultimately bounded solutions; i.e., every solution enters a compact set in finite time and stays there.

\section{Gradient formulation of Hopfield networks}
The semi-relaxation system formulation \eqref{hopfield1} of Hopfield neural networks immediately leads to their description as \emph{gradient systems}. In fact, consider \emph{any} semi-relaxation system \eqref{rel}, with $P$ not necessarily satisfying the passivity condition \eqref{dis}. 
Denote again by $H^*(e)=\sup_x [x^\top e - H(x)]$ the Legendre-Fenchel transform of $H(x)$, with $e= \frac{\partial H}{\partial x}(x)$ defining by convexity a mapping $x \mapsto e$ which is invertible on its image. Then it is well-known that
\bq
 x = \frac{\partial H^*}{\partial e}(e), \quad \frac{\partial^2 H^*}{\partial e^\top \partial e}(e)= \left(\frac{\partial^2 H}{\partial x^\top \partial x}(x)\right)^{-1}.
\eq
Substituting $\dot{x}= \frac{\partial^2 H^*}{\partial e^\top \partial e}(e) \dot{e}$ into \eqref{rel} it follows \cite{vds24} that in $e$-coordinates the semi-relaxation system \eqref{rel} takes the form 
\bq
\label{grad}
\begin{array}{rcl}
\frac{\partial^2 H^*}{\partial e^\top \partial e}(e)\dot{e} &= &- \frac{\partial P}{\partial e}(e,u)  \\[2mm]
y& = & \frac{\partial P}{\partial u}(e,u).
\end{array}
\eq
This defines a \emph{gradient system} \cite{vds24}, with \emph{Hessian Riemannian metric} $\frac{\partial^2 H^*}{\partial e^\top \partial e}(e)$ and 
\emph{potential} function $P$. (Similar reasoning, for quadratic $H$, was used in the derivation of the Brayton-Moser equations for RLC-electrical networks \cite{BM1,jeltsemas}.) 

The gradient system formulation \eqref{grad} provides useful information about the dynamics. In particular, the function $P$ for constant $u$ declines along trajectories, but not with respect to the Euclidian inner product on the $e$-space, but with respect to the Hessian matrix $\frac{\partial^2 H^*}{\partial e^\top \partial e}(e)$. More generally, see \cite{vds24}, one derives the following dissipation inequality (which is \emph{different} from the passivity dissipation inequality $\frac{d}{dt}H \leq y^\top u$):
\bq
\label{disP}
\frac{d}{dt}P = \frac{\partial P}{\partial e^\top}(e,u)\dot{e} + \frac{\partial P}{\partial u^\top}(e,u) \dot{u} = -\dot{e}^\top \frac{\partial^2 H^*}{\partial e^\top \partial e}(e)\dot{e} + y^\top \dot{u} \leq  y^\top \dot{u}.
\eq
In particular this implies the following result.
\begin{proposition}
\label{prop1}
Consider the system \eqref{grad} for \emph{constant} $u=\bar{u}$. Then for any invariant compact set $\K$ in $e$-space, solutions starting in $\K$ converge to the invariant set $\{e \in \K \mid \dot{e}=0 \} = \{e \in \K \mid \frac{\partial P}{\partial e}(e,\bar{u})=0 \}$. Hence for all initial conditions in $\K$ solutions will converge to equilibria of \eqref{grad}. 
\end{proposition}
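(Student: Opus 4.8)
The plan is to run a LaSalle invariance argument using $P(\cdot,\bar{u})$ as a Lyapunov function. First I would specialize the dissipation inequality \eqref{disP} to constant input: along any solution of \eqref{grad} with $u\equiv\bar{u}$,
\bq
\frac{d}{dt}P(e(t),\bar{u}) \;=\; -\,\dot{e}^\top \frac{\partial^2 H^*}{\partial e^\top \partial e}(e)\,\dot{e} \;\leq\; 0,
\eq
the inequality being strict unless $\dot{e}=0$, because the Hessian Riemannian metric $\frac{\partial^2 H^*}{\partial e^\top \partial e}(e)=\bigl(\frac{\partial^2 H}{\partial x^\top \partial x}(x)\bigr)^{-1}$ is positive definite, being the inverse of the positive definite Hessian of the strictly convex $H$. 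Hence $t\mapsto P(e(t),\bar{u})$ is nonincreasing along solutions.

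Next I would bring in compactness and invariance of $\K$. Every solution starting in $\K$ stays in $\K$, hence exists for all $t\geq 0$; since $P(\cdot,\bar{u})$ is continuous, it is bounded on $\K$, so $P(e(t),\bar{u})$ is nonincreasing and bounded below, and therefore converges. LaSalle's invariance principle then yields that each such solution approaches the largest invariant subset of $S:=\{e\in\K \mid \frac{d}{dt}P(e,\bar{u})=0\}$. The identification of $S$ is the only place where the geometry must be handled with care: from the displayed identity, $e\in S$ iff $\dot{e}^\top \frac{\partial^2 H^*}{\partial e^\top \partial e}(e)\dot{e}=0$, which by positive definiteness of the metric is equivalent to $\dot{e}=0$; and reading off \eqref{grad}, since $\frac{\partial^2 H^*}{\partial e^\top \partial e}(e)$ is invertible, $\dot{e}=0$ is equivalent to $\frac{\partial P}{\partial e}(e,\bar{u})=0$. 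This proves the stated equality $S=\{e\in\K\mid\dot{e}=0\}=\{e\in\K\mid \frac{\partial P}{\partial e}(e,\bar{u})=0\}$. Finally, every point of $S$ is an equilibrium of \eqref{grad}, so $S$ is itself invariant and therefore equals its own largest invariant subset; hence all solutions starting in $\K$ converge to $S$, i.e.\ to the equilibrium set of \eqref{grad}.

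The one delicate point worth a remark is the phrase ``solutions will converge to equilibria''. LaSalle delivers convergence of each solution to the \emph{set} $S$; since $\omega$-limit sets of bounded solutions are connected and compact, this forces convergence to a single equilibrium whenever the equilibria in $\K$ are isolated, but with a continuum of equilibria one only obtains convergence to the connected equilibrium set unless additional structure is invoked (e.g.\ analyticity of $P(\cdot,\bar{u})$, which would supply a \L{}ojasiewicz gradient inequality). I would state the conclusion at the level of convergence to $S$ and add such a remark. Beyond this, there is no real obstacle: the proof is essentially bookkeeping, the crucial observation being that the Riemannian-gradient structure makes the zero-velocity locus, the critical-point locus of $P(\cdot,\bar{u})$, and the vanishing locus of $\frac{d}{dt}P$ all coincide, so that the LaSalle limit set is exactly the equilibrium set.
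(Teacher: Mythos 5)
Your argument is exactly the paper's: LaSalle's invariance principle applied with $P(\cdot,\bar{u})$ as the decreasing function via \eqref{disP}, followed by the observation that positive definiteness of $\frac{\partial^2 H^*}{\partial e^\top \partial e}(e)$ identifies the zero-dissipation locus with $\{\dot{e}=0\}$; you merely spell out the bookkeeping (boundedness of $P$ on $\K$, invariance of the equilibrium set) that the paper leaves implicit. Your closing caveat about convergence to the equilibrium \emph{set} versus a single equilibrium is a fair reading of the statement's wording, but it does not change the route.
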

\begin{proof}
By direct application of LaSalle's invariance principle and \eqref{disP} for constant $u=\bar{u}$ the solutions starting in $\K$ converge to the largest invariant set contained in $\{e \in \K \mid \dot{e}^\top \frac{\partial^2 H^*}{\partial e^\top \partial e}(e)\dot{e}=0 \}$. 
Since $\frac{\partial^2 H^*}{\partial e^\top \partial e}(e)$ is positive definite this is equal to the set $\{e \in \K \mid \dot{e}=0 \}$. 
\end{proof}
Applied to Hopfield networks this leads to the following gradient formulation of Hopfield networks in $V$-coordinates:
\bq
\label{hopfield2}
\frac{\partial^2 H^*}{\partial V^\top \partial V}(V) \dot{V} = - \frac{\partial P}{\partial V}(V,I), \quad -V= \frac{\partial P}{\partial I}(V,I),
\eq
with $H$ and $P$ defined in \eqref{H}, \eqref{P}. (This was recognized before in \cite{sastry, halder}; without the identification of the Riemannian metric as the inverse of the Hessian matrix of the energy $H$.) Furthermore, one obtains the dissipation inequality (see \cite{ortega} for a similar result in the context of RLC networks)
\bq
\label{disPhn}
\frac{d}{dt} P  =  \frac{\partial P}{\partial V^\top}(V,I)\dot{V} +  \frac{\partial P}{\partial I^\top}(V,I) \dot{I}= - \dot{V}^\top \frac{\partial^2 H^*}{\partial V^\top \partial V}(V) \dot{V} - V^\top \dot{I}  \leq -V^\top \dot{I}.
\eq
Special feature of the gradient formulation of Hopfield networks is that because the functions $g_i$ (and therefore $h_i$) are of sigmoid type, the $V$-space of the gradient system \eqref{hopfield2} is \emph{bounded}. For example, see already the discussion below \eqref{P1}, if $g_i(v_i)= \tanh (\lambda v_i), \lambda \in \mR, i=1, \cdots,n$, then the state space of the gradient formulation \eqref{hopfield2} is the open $n$-cube $(-1,1)^n$. As a result, application of Proposition \ref{prop1} to \eqref{hopfield2} yields the following sharpened version of some of the results discussed in \cite{hopfield84}.
\begin{proposition}
\label{prop2}
Consider the Hopfield neural network \eqref{hopfield2} for constant $I=\bar{I}$ and $g_i$ of sigmoid type between $-1$ and $1$. Then the compact set $\K_{\epsilon}:= \{V \mid |V_i | \leq 1 - \epsilon, i=1,\cdots,n \}$ in $V$-space is invariant for $\epsilon$ small enough. Hence for $\epsilon$ small enough the solutions starting in $\K_{\epsilon}$, will converge to local extrema of $P(V,\bar{I})$.
\end{proposition}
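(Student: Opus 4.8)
The plan is to first prove that the box $\K_\epsilon$ is forward invariant for $\epsilon$ small enough, and then to apply Proposition~\ref{prop1} with $\K=\K_\epsilon$ and $\bar u=\bar I$.

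For the invariance I would write the gradient dynamics \eqref{hopfield2} componentwise. Since $H(q)=\sum_i H_i(q_i)$ is separable, $\frac{\partial^2 H}{\partial q^\top\partial q}$ is diagonal and positive, hence so is its inverse $\frac{\partial^2 H^*}{\partial V^\top\partial V}$, with $i$-th diagonal entry $1/H_i''(q_i)$. The $i$-th component of \eqref{hopfield2} therefore reads
\[
\dot V_i=\frac{g_i'(v_i)}{C_i}\Big[(TV)_i-\frac{v_i}{R_i}+\bar I_i\Big],\qquad V_i=g_i(v_i),
\]
so that, since $g_i'>0$, the sign of $\dot V_i$ equals the sign of $(TV)_i-v_i/R_i+\bar I_i$. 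On the face $\{V_i=1-\epsilon\}$ of $\K_\epsilon$ one has $v_i=g_i^{-1}(1-\epsilon)\to+\infty$ as $\epsilon\downarrow 0$ (sigmoid assumption), while the remaining terms stay uniformly bounded on $\K_\epsilon$: $|(TV)_i|\le\sum_j|T_{ij}|$ because $|V_j|\le 1$, and $\bar I_i$ is constant; likewise $v_i=g_i^{-1}(-(1-\epsilon))\to-\infty$ on the face $\{V_i=-(1-\epsilon)\}$. Hence there is $\epsilon_0>0$ such that for all $0<\epsilon\le\epsilon_0$ and all $i$ one has $\dot V_i<0$ on $\{V_i=1-\epsilon\}\cap\K_\epsilon$ and $\dot V_i>0$ on $\{V_i=-(1-\epsilon)\}\cap\K_\epsilon$.

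Because $\K_\epsilon$ is a box, at any boundary point its tangent cone is the intersection, over the active upper faces $V_i=1-\epsilon$ (resp.\ lower faces $V_i=-(1-\epsilon)$), of the half-spaces $\{w:w_i\le 0\}$ (resp.\ $\{w:w_i\ge 0\}$); the sign estimate just obtained says precisely that the vector field of \eqref{hopfield2} lies in this cone at every point of $\partial\K_\epsilon$. By Nagumo's subtangentiality theorem $\K_\epsilon$ is thus forward invariant — equivalently, at a hypothetical first exit time through the face $V_i=1-\epsilon$ one would need $\dot V_i\ge 0$ there, contradicting $\dot V_i<0$, and similarly for the lower faces. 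Since $\K_\epsilon$ is compact and the vector field is smooth on the open cube $(-1,1)^n\supset\K_\epsilon$, every solution starting in $\K_\epsilon$ exists for all $t\ge 0$ and stays in $\K_\epsilon$. As \eqref{hopfield2} is of the form \eqref{grad} with $e=V$ and $u=I$, Proposition~\ref{prop1} applies with $\K=\K_\epsilon$ and $\bar u=\bar I$, yielding convergence of every such solution to the set $\{V\in\K_\epsilon:\frac{\partial P}{\partial V}(V,\bar I)=0\}$ of critical points of $P(\cdot,\bar I)$, i.e.\ to the local extrema of $P(V,\bar I)$, as asserted.

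I expect the invariance step to be the crux: one must extract a single $\epsilon_0$ working simultaneously for all $2n$ faces and handle the flow-invariance carefully at edges and corners of the box, where several constraints are active. Once the componentwise sign estimate is in place — using positivity of $g_i'$, the blow-up of $g_i^{-1}(\pm(1-\epsilon))$ as $\epsilon\downarrow 0$, and the boundedness of $(TV)_i$ on $\K_\epsilon$ — the rest is the standard Nagumo/LaSalle argument via Proposition~\ref{prop1} and the dissipation inequality \eqref{disPhn}. The only genuinely soft point is terminology: ``local extrema'' should here be read as ``critical points of $P(\cdot,\bar I)$'', the distinguished (locally asymptotically stable) ones among them being the local minima because $P(\cdot,\bar I)$ is non-increasing along the flow.
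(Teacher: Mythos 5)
Your proposal is correct and follows the same route as the paper: the paper's proof is only a sketch that delegates the invariance of $\K_\epsilon$ to \cite{sastry}, p.~203, and then invokes Proposition~\ref{prop1}, exactly as you do. Your componentwise sign estimate $\dot V_i=\frac{g_i'(v_i)}{C_i}\bigl[(TV)_i-\frac{v_i}{R_i}+\bar I_i\bigr]$ with $g_i^{-1}(\pm(1-\epsilon))\to\pm\infty$ dominating the bounded terms $(TV)_i$ and $\bar I_i$ is precisely the standard argument behind that citation, and your closing remark that ``local extrema'' should be read as critical points of $P(\cdot,\bar I)$ is an accurate reading of what Proposition~\ref{prop1} actually delivers.
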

\begin{proof} (Sketch)
The fact that $\K_{\epsilon}$ is invariant for $\epsilon$ small enough was proved in \cite{sastry}, p. 203. The rest follows from Proposition \ref{prop1}. See \cite{hirsch, salam, vidyasagar} for various mathematical details and extensions.
\end{proof}
Hence typically the solutions of the Hopfield network for constant $I=\bar{I}$ converge to local minima of $P(V,\bar{I})$, corresponding to 'associative memories' \cite{hopfield84}.
\begin{remark}
{\rm
Note that for $\lambda \to \infty$ (sharpening sigmoid functions) the function $V=\tanh (\lambda v)$ 'converges' to the relation $V=-1$ if $v<0$, $V=1$ if $v>0$, and $V \in [-1,1]$ if $v=0$. The corresponding convex function $K_{\lambda}(v)$ satisfying $K_{\lambda}'(v)=\tanh (\lambda v)$ 'converges' to the convex function $K(v)=|v|$. Its Legendre transform is given by $K^*(V)=0$ if $-1\leq V \leq 1$, and $K^*(V)=\infty$ if $V<-1$ or $V>1$. Thus for $\lambda \to \infty$ the net effect of the terms $H_i^*(V_i)$ in \eqref{P} with $V_i=\tanh (\lambda v_i)$ converges to zero on the constrained $V$-space $(-1,1)^n$, and hence $P(V,I)$ converges to $- \frac{1}{2}V^\top T V - V^\top I$.
}
\end{remark}
Finally, it is of interest to consider \emph{interconnections} and \emph{control} of Hopfield networks. Let us slightly generalize \eqref{hopfield2} by partitioning the nodes (neurons) into \emph{terminal} nodes and \emph{hidden} nodes, where the hidden nodes refer to nodes with \emph{zero} external currents (no interaction with environment). This leads to the gradient system
\bq
\label{hopfield3}
\begin{array}{rcl}
\frac{\partial^2 H^*}{\partial V^\top \partial V}(V) \dot{V} & = & - \frac{\partial P}{\partial V}(V,I^t)\\[2mm]
-V^t & = &\frac{\partial P}{\partial I^t}(V,I^t)
\end{array}
\eq
with inputs $I^t$ equal to the external currents at the terminal nodes), and outputs the corresponding voltage potentials $-V^t$. Considering two Hopfield networks, indexed by $\alpha$ and $\beta$, they can be interconnected by imposing, for a certain matrix $S$,
$I^t_{\alpha} = S^\top V^t_{\beta}, I^t_{\beta} = S V^t_{\alpha}$. This implies $\dot{I}^t_{\alpha} = S^\top \dot{V}^t_{\beta}, \dot{I}^t_{\beta} = S \dot{V}^t_{\alpha}$, and leads to an interconnected Hopfield network with total energy equal to $H_{\alpha}(q_{\alpha}) + H_{\beta}(q_{\beta})$, and dissipation potential given by
\bq
P_{\alpha}(V_{\alpha},0) + P_{\beta}(V_{\beta},0) + \left(V^t_{\alpha}\right)^\top S V^t_{\beta}
\eq

\section{Concluding remarks}
The electrical network interpretation of continuous Hopfield networks as given in \cite{hopfield84} has been revisited by combining the linear capacitors corresponding to cell membranes with 'amplifiers' of sigmoid type into nonlinear capacitors. This naturally leads to a semi-relaxation system, where passivity with respect to the total electric energy is guaranteed far enough from the origin of the state space. In its turn, it leads to a gradient system formulation with Riemannian metric given by the inverse of the Hessian of the total energy, and a dissipation potential that has physical dimension of \emph{power}. This appears to be in contrast with the discrete neural network model in \cite{hopfield82}, which is 'isomorphic with an Ising model', and therefore $-\frac{1}{2}\sum_{i \neq j} T_{ij}V_iV_j - \sum_i V_iI_i$ rather has dimension of \emph{energy}. Finally, the alternative dissipation inequality \eqref{disPhn} involving the dissipation potential provides a starting point for interconnection and control of Hopfield networks.


\begin{thebibliography}{00}

\bibitem{BM1}
R.K. Brayton, J.K. Moser, A theory of nonlinear networks I, Quart. Appl. Math., 22(1):1--33, 1964.

%

\bibitem{camlibel}
M.K. Camlibel, A.J. van der Schaft,
Port-Hamiltonian systems theory and monotonicity,
SIAM J. Contr. Optimization 61(4):2193--2221, 2023.

\bibitem{halder}
A. Halder, K.F. Caluya, B. Travacca, S.J. Moura,
Hopfield neural network flow: a geometric viewpoint,
IEEE Trans. Neural Networks and Learning Systems, 31(11):4869--4880, 2020.

\bibitem{hirsch}
M.W. Hirsch, Convergence in neural nets, Proc. Int. Joint Conf. Neural Networks, vol. II, pp. 115--125, 1987.

\bibitem{hopfield82}
J.J. Hopfield, 
Neural networks and physical systems with emergent collective computational abilities,
Proc. Natl. Acad. Sci. USA, 79:2554--2558, 1982.

\bibitem{hopfield84}
J.J. Hopfield, Neurons with graded response have collective computational properties like those of two-state neurons,
Proc. Natl. Acad.Sci USA, 81:3088--3092, 1984.

\bibitem{jeltsemas}
D. Jeltsema, J.M.A. Scherpen, Multidomain modeling of nonlinear networks and systems; Energy- and power-based perspectives,
IEEE Control Systems Magazine, August 2009, pp. 28--59.

\bibitem{ortega}
R. Ortega, D. Jeltsema, J.M.A. Scherpen, Power shaping: a new paradigm for stabilization of nonlinear RLC circuits, IEEE Trans. Automatic Control, 48(10):1762--1767, 2003.

\bibitem{pogromsky}
A. Pogromsky, H. Nijmeijer, Cooperative oscillatory behavior of mutually coupled dynamical systems.
IEEE Trans. Circuits Syst. I, 48(2):152--162, 2001.

\bibitem{salam}
F.M.A. Salam, Y.-W. Wang, M.-R. Choi, On the analysis of dynamic feedback neural networks, IEEE Trans. Circuits Syst., CAS-38:196--201, 1991.

\bibitem{sastry}
S. Sastry, Nonlinear Systems: Analysis, Stability and Control, IAM, Systems and Control, Springer, New York, 1999.

\bibitem{steur}
E. Steur, I. Tyukin, H. Nijmeijer, Semi-passivity and synchronization of neuronal oscillators, IFAC Proceedings Volumes
42(7):21-26, 2009.

\bibitem{vds24}
A.J. van der Schaft.
Reciprocity of nonlinear systems,
SIAM J. Control and Optimization, 62(6):3019--3041, 2024.

\bibitem{jeltsema}
A.~{van der S}chaft and D.~Jeltsema.
\newblock Port-{Hamiltonian} systems theory: {A}n introductory overview.
\newblock Foundations and Trends in Systems and Control, 1(2-3):173--378, 2014.

\bibitem{vidyasagar}
M. Vidyasagar, Minimum-seeking properties of analog neural networks with multilinear objective function, IEEE Trans. Aut. Contr., 40(8): 1359--1375, 1995.

\end{thebibliography}
\end{document}